\documentclass[12pt,a4paper, reqno]{amsart}
\usepackage{amsfonts,amsmath,amsthm,amssymb, amscd}
\usepackage[top=2cm, bottom=5cm, left=3cm, right=1cm]{geometry}
\def\gp#1{\langle#1\rangle}

\usepackage[active]{srcltx}


\newtheorem{theorem}{Theorem}
\newtheorem{lemma}{Lemma}
\newtheorem{corollary}{Corollary}

\title[Locally nilpotent unit group]{Group algebra whose  unit group \\ is locally nilpotent}
\author{V.~Bovdi}

\keywords {unit group, group algebra, locally nilpotent group, Engel group}
\dedicatory{Dedicated to the memory of Professor L.G.~Kov\'acs}
\thanks{
Supported by the UAEU grants:  UPAR  G00001922}
\subjclass{20C05, 16S34,   20F45, 20F19}

\address{Department of Math. Sciences\\
UAE University\\
Al-Ain\\
United Arab Emirates}
\email{vbovdi@gmail.com}

\begin{document}
\maketitle

\begin{abstract}
We present a complete  list of   groups $G$ and  fields $F$ for which: (i) the group of normalized units $V(FG)$ of the group algebra $FG$ is  locally nilpotent;
(ii) the   group  algebra $FG$  has    a finite number of nilpotent elements  and   $V(FG)$  is an Engel group.
\end{abstract}

\section{Introduction}
Let $V(FG)$ be the normalized subgroup of the group of units $U(FG)$ of the group algebra $FG$  of a group $G$ over a field $F$ of characteristic ${\rm char}(F)=p\geq 0$. It is well known that $U(FG)=V(FG)\times U(F)$, where $U(F)=F\setminus \{0\}$.
The group of normalized units $V(FG)$ of a modular group algebra $FG$ has a complicated  structure and was  studied  in several papers. For an overview we  recommend the survey paper \cite{Bovdi_survey}.

An explicit list of groups $G$ and rings $K$ for which $V(KG)$ are nilpotent was  obtained by I.~Khripta (see \cite{Khripta_1} for the modular case and \cite{Khripta_2} for the non-modular case). In \cite{Bovdi_solvable} it was completely determined when   $V(FG)$ is solvable.
It is still a challenging problem  whether $V(FG)$ is an  Engel group. This question has a long history (see \cite{Bovdi_engel, Bovdi_Khripta_1, Bovdi_Khripta_2,Bovdi_Khripta_4, Bovdi_Khripta_3, Shalev_I}). The non-modular case was solved  by A.~Bovdi (see \cite{Bovdi_engel}, Theorem 1.1, p.174). For the modular case there is no complete solution (see \cite{Bovdi_engel}, Theorem 3.2, p.175),  but there is a full   description of $FG$ when $V(FG)$ is a bounded Engel group (see \cite{Bovdi_engel}, Theorem 3.3, p.176).

It is well known (for example, see \cite{Traustason}) that the Engel property of a group is close to its  local nilpotency. A locally nilpotent group  is always   Engel (see \cite{Traustason}). However these classes of groups do not coincide (see  the Golod's counterexample in \cite{Golod}). The following results  are classical (see \cite{Traustason}): each  Engel profinite group (see  \cite{Wilson_Zelmanov}), each compact Engel group (see \cite{Medvedev}),  each Engel linear group (see \cite{Suprunenko_Garashchuk}), each 3-Engel and 4-Engel group  (see \cite{Heineken} and  \cite{Havas_Vaughan-Lee})  and  all Engel groups satisfying max (see \cite{Baer}) are   locally nilpotent.

A group $G$ is said to be {\it Engel } if for any $x,y\in G$ the equation
$(x,y,y, \ldots,y)=1$ holds, where $y$  is repeated in the commutator sufficiently many times depending on $x$ and $y$. We shall use the left-normed simple commutator notation
$(x_1,x_2)=x_1^{-1}x_2^{-1}x_1x_2$\quad  and
\[
(x_1,\ldots,x_{n})=\big((x_1,\ldots,x_{n-1}),x_n\big), \qquad\quad  (x_{1},\ldots, x_n\in G).
\]
A group is called {\it locally nilpotent} if  all its  f.~g. (finitely generated) subgroups are nilpotent. Such  a group is always   Engel (see \cite{Traustason}). The set  of  elements of finite orders of a group $G$ (which is not necessarily a subgroup) is called {\it the torsion part} of $G$ and is denoted   by $\mathfrak{t}(G)$. We use the notion and results from  the book \cite{Bovdi_book} and the survey papers \cite{Bovdi_survey, Traustason}.

In several articles,      M.~Ramezan-Nassab attempted   to   describe   the structure of  groups $G$ for which $V(FG)$ are Engel (locally nilpotent) groups in the case when  $FG$ have only a finite number of nilpotent elements (see Theorem 1.5 in \cite{Ramezan-Nassab_1}, Theorems 1.2 and 1.3 in  \cite{Ramezan-Nassab_2} and Theorem 1.3 in  \cite{Ramezan-Nassab_3}).  The following  theorem   gives  a complete answer.

\begin{theorem}\label{T:1}
Let $FG$  be the  group algebra of a group $G$.   If  $FG$  has   only a  finite number of non-zero nilpotent elements,   then $F$ is a finite field  of ${\rm char}(F)=p$. Additionally, if    $V(FG)$ is an Engel group,  then $V(FG)$ is nilpotent,  $G$ is a finite group such that  $G=Syl_p(G)\times A$, where $Syl_p(G)\not=\gp{1}$, $G'\leq Syl_p(G)$ and  $A$ is a central subgroup of $G$.
\end{theorem}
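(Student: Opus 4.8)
The plan is to derive the finiteness of both $F$ and $G$ directly from the hypothesis that the set $N$ of nilpotent elements of $FG$ is finite, and to bring in the Engel condition only at the very end. Write $N$ for the (finite) set of nilpotent elements and fix a nonzero $n\in N$. The finiteness of $F$ is then immediate: if $F$ were infinite, the elements $\lambda n$ with $\lambda\in F$ would form an infinite family of distinct nilpotents, contradicting $|N|<\infty$. Hence $F$ is finite and $\mathrm{char}(F)=p>0$.

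The core of the argument is to prove that $G$ is finite. I would first let $G$ act by conjugation on the finite set $N\setminus\{0\}$ via $g\cdot m=gmg^{-1}$; since a conjugate of a nilpotent is nilpotent, this yields a homomorphism $G\to\mathrm{Sym}(N\setminus\{0\})$ whose kernel $C=C_G(N)$ has finite index. Now every $c\in C$ commutes with the fixed $n$, so $(cn)^k=c^kn^k=0$ for large $k$, and the map $c\mapsto cn$ sends $C$ into the finite set $N\setminus\{0\}$. Its fibres are the left cosets of $\mathrm{Stab}=\{c\in C: cn=n\}$, and the decisive point is that $\mathrm{Stab}$ is finite: an element $c$ with $cn=n$ must permute the finite support of $n$ by left translation while preserving coefficients, which gives an embedding $\mathrm{Stab}\hookrightarrow\mathrm{Sym}(\mathrm{supp}(n))$, faithful because $cs=s$ for all $s\in\mathrm{supp}(n)$ forces $c=1$. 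Therefore $|C|\le|N\setminus\{0\}|\cdot|\mathrm{Stab}|<\infty$, and combined with $[G:C]<\infty$ this shows $G$ is finite. I expect this counting step — in particular isolating the finite left stabiliser of a single nonzero nilpotent — to be the main obstacle and the technical heart of the theorem; the rest is assembly of known results.

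Once $G$ and $F$ are finite, $FG$ is a finite ring, so $V(FG)$ is a finite group; being a finite Engel group it satisfies the maximal condition and is therefore nilpotent by Baer's theorem \cite{Baer}, which gives the first assertion. To obtain the structure of $G$ I would invoke Khripta's classification of group algebras with nilpotent unit group \cite{Khripta_1}: for $\mathrm{char}(F)=p$, $V(FG)$ is nilpotent precisely when $G$ is nilpotent and $G'$ is a finite $p$-group. For finite nilpotent $G$ this means $G=\prod_q Syl_q(G)$ with every $q$-Sylow ($q\ne p$) abelian, so setting $P=Syl_p(G)$ and $A=\prod_{q\ne p}Syl_q(G)$ gives $G=P\times A$ with $A$ abelian; as $A$ commutes with $P$ in the direct product it is central, and $G'=P'\le P$. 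Finally $P\ne\gp{1}$: otherwise $p\nmid|G|$, so $FG$ is semisimple, and since $U(FG)=V(FG)\times U(F)$ is then nilpotent all Wedderburn blocks must be $1\times1$, making $FG$ commutative and hence reduced — contradicting the existence of the nonzero nilpotent $n$. This delivers exactly the stated decomposition $G=Syl_p(G)\times A$ with $Syl_p(G)\ne\gp{1}$, $G'\le Syl_p(G)$ and $A$ central.
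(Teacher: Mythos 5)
Your proof is correct, but your route to the finiteness of $G$ is genuinely different from the paper's, and in fact stronger. The paper first shows that $G$ is torsion (no element of infinite order can commute with a nonzero nilpotent, by an annihilator argument, and the conjugates $g^{-i}ug^{i}$ of a fixed nilpotent must repeat), and then leans on the Engel hypothesis twice: Lemma~\ref{L:1} rules out $Syl_p(G)=\gp{1}$ and controls the $q$-Sylow structure, and a separate Engel/nilpotency argument gives normality of the finite $p$-Sylow subgroup, after which the finiteness of $\{(h-1)g\lambda\}$ yields $|G|<\infty$. Your counting argument --- finite index of the centralizer $C$ of the finite set $N\setminus\{0\}$, followed by the map $c\mapsto cn$ whose fibres are left cosets of the stabiliser, itself finite because it embeds into $\mathrm{Sym}(\mathrm{supp}(n))$ by left translation --- uses only the finiteness of $N$ together with the existence of one nonzero nilpotent, and never invokes the Engel condition; I checked each step ($cn$ is indeed a nonzero nilpotent since $c$ is a unit commuting with $n$, and the stabiliser embedding is faithful since $cs=s$ for a single $s$ already forces $c=1$) and it is sound. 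This is a clean strengthening: it shows that ``$G$ is finite'' already follows from the first hypothesis alone (both you and the paper implicitly assume $N\setminus\{0\}\neq\varnothing$, as the conclusions $\mathrm{char}(F)=p$ and $Syl_p(G)\neq\gp{1}$ require). The remainder of your argument (Zorn/Baer for nilpotency of the finite Engel group $V(FG)$, Khripta for the structure of $G$, the Wedderburn decomposition for $Syl_p(G)\neq\gp{1}$) matches the paper's in substance, though the paper re-derives the Sylow structure from Lemma~\ref{L:1} and \cite{Bovdi_engel} rather than citing Khripta's classification. One presentational caveat: the modular form of Khripta's theorem that you quote presupposes that $G$ has a $p$-element, so you should establish $Syl_p(G)\neq\gp{1}$ first (your semisimplicity argument for this is independent of Khripta, so reordering fixes it and nothing circular results).
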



The next result plays a technical role.

\begin{theorem}\label{T:2}
Let $G$ be a group such that  $G'$ is a locally finite p-group and  the  $p$-Sylow subgroup  $P=Syl_q(G)$ of $G$ is normal in  $G$.
If $V(FG)$ is an Engel group and ${\rm char}(F)=p>0$ then the following conditions hold:
\begin{itemize}
\item[({i})] $V'\leq 1+\mathfrak I (G')$ and $1+\mathfrak I (P)=Syl_p( V(FG))$;

\item[(ii)]  for each $q\not=p$, the group  $Syl_q(G)$ is  central;

\item[(iii)]  the group $\mathfrak{t}(G)=P\times D$,
where $D= \times_{\substack{q\not=p}} Syl_q(G)$;
\item[(iv)] let $M$ be the subgroup of $G/P$ generated by the central subgroup $DP/P$ and by all the elements of infinite order in $G/P$. Then  $V(FG)/(1+\mathfrak I (P))\cong V(FM)$    and    $V(FM)/V(FD)$   is a  torsion-free group;
\item[(v)] $G/\mathfrak{t}(G)$ is an abelian torsion-free group and, if $D$ is  finite, then
\begin{equation}\label{E:1}
\begin{split}
V(FM)\cong   V(FD)  \times (\underbrace{G/\mathfrak{t}(G)\times\cdots\times  G/\mathfrak{t}(G)}_n),\\
\end{split}
\end{equation}
where $n$ is the  number of summands  in the  decomposition of $FD$ into a direct sum of fields.
\end{itemize}
\end{theorem}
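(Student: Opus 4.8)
The plan is to build everything on one structural observation: since $G'$ is a $p$-group and $P=Syl_p(G)$ is normal, we have $G'\le P$, so the quotient $\bar G:=G/P$ is \emph{abelian}. The whole analysis is then organized around the two normal pieces, the $p$-group $1+\mathfrak I(P)$ and the abelian quotient $V(F\bar G)$. For (i) I would first note that $\mathfrak I(P)$ is a nil ideal: every element has finite support inside some finite subgroup $P_0\le P$, and $\mathfrak I(P_0)$ is nilpotent in characteristic $p$, so each $1+x$ with $x\in\mathfrak I(P)$ is a $p$-element and $1+\mathfrak I(P)$ is a normal $p$-subgroup of $V(FG)$. The inclusion $V'\le 1+\mathfrak I(G')$ is immediate, because $FG/\mathfrak I(G')\cong F[G/G']$ is commutative, so the natural map $V(FG)\to V(F[G/G'])$ has abelian image and kernel $1+\mathfrak I(G')$. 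To identify $1+\mathfrak I(P)$ with the Sylow $p$-subgroup I would use $V(FG)/(1+\mathfrak I(P))\cong V(F\bar G)$ together with the reducedness of $F\bar G$ (the abelian group $\bar G$ has only $p'$-torsion): this forces the quotient to be a $p'$-group, so $1+\mathfrak I(P)$ contains every $p$-element and is the unique Sylow $p$-subgroup.

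The heart of the theorem is (ii), and I expect it to be the main obstacle. Fix $q\ne p$, a $q$-element $g$, and an arbitrary $a\in G$; the goal is $(g,a)=1$. Consider the $F$-linear operator $\rho_g\colon FG\to FG$, $z\mapsto g^{-1}zg$. Because $g$ has order prime to $p$, $\rho_g$ acts locally finite-dimensionally with $\rho_g^{|g|}=1$ and $x^{|g|}-1$ separable, so $\rho_g$ is \emph{semisimple}; in particular $(g,a)=1$ is equivalent to $\rho_g(a)=a$. To force this I would use the Engel property: the higher commutators $c_1=(a,g),\ c_2=(a,g,g),\dots$ all lie in $G'\le P$, form a finite sequence terminating at some $c_n=1$, and generate a finite $p$-subgroup $P_0$ (a finitely generated subgroup of the locally finite $p$-group $P$). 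Since $g$ normalizes $P_0$ and $\rho_g(a)=ac_1$, the $\rho_g$-invariant finite-dimensional space $aFP_0$ is governed by the terminating sequence $c_1,\dots,c_n=1$ and the nilpotence of $\mathfrak I(P_0)$, and carrying this out yields $(\rho_g-1)^N(a)=0$ for some $N$. By semisimplicity $\ker(\rho_g-1)^N=\ker(\rho_g-1)$, so $\rho_g(a)=a$ and $(g,a)=1$. As $a$ ranges over $G$ this gives $g\in Z(G)$; the central $q$-elements then form a subgroup, namely $Syl_q(G)$, which proves (ii). The genuinely delicate point is precisely the extraction of $(\rho_g-1)^N(a)=0$ from the termination of the Engel sequence, where the interplay between group commutators, powers of $\rho_g-1$, and the local nilpotence of $\mathfrak I(P)$ must be handled with care.

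Parts (iii) and (iv) then follow formally. By (ii) each $Syl_q(G)$ is central, $P\cap D=1$ by coprimality, and the coprime decomposition $t=t_pt_{p'}$ of a torsion element (with $t_p\in P$ the unique normal Sylow and $t_{p'}\in D$) gives $\mathfrak t(G)=P\times D$, which is (iii). For (iv), every torsion element of $\bar G=G/P$ is a $p'$-element, since a power of it lands in the locally finite $p$-group $P$ and is thus of finite order; hence the torsion part of $\bar G$ is exactly $DP/P$, every element of $\bar G$ is either in $DP/P$ or of infinite order, and therefore $M=\bar G$. Consequently $V(FM)=V(F\bar G)\cong V(FG)/(1+\mathfrak I(P))$ by (i). Writing $\overline M=M/(DP/P)$, which is torsion-free abelian, the central idempotents coming from $F(DP/P)$ (a reduced commutative algebra of a $p'$-group) split $FM$ into commutative pieces whose torsion units are scalars; since $F\overline M$ is a domain with only trivial units, any torsion unit of $FM$ lies in $V(FD)$, so $V(FM)/V(FD)$ is torsion-free.

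Finally, for (v), $G/\mathfrak t(G)=\overline M$ is abelian (as $G'\le P\le\mathfrak t(G)$) and torsion-free (as $\mathfrak t(G)$ is now a subgroup by (iii)). When $D$ is finite, $FD\cong K_1\oplus\cdots\oplus K_n$ with $K_1=F$ the principal component, and its $n$ central idempotents split $FM\cong\prod_{i=1}^n K_i\overline M$, each factor a domain with trivial unit group $K_i^\times\times\overline M$ since $\overline M$ is orderable. Computing the normalized units of $FM$ then reduces to one linear condition: the augmentation singles out the principal idempotent, removing exactly one field factor from $\prod_iK_i^\times$ and identifying it with $V(FD)\cong\prod_{i=2}^n K_i^\times$, while each factor contributes one copy of $\overline M=G/\mathfrak t(G)$. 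This yields $V(FM)\cong V(FD)\times\big(G/\mathfrak t(G)\times\cdots\times G/\mathfrak t(G)\big)$ with $n$ copies, which is \eqref{E:1}.
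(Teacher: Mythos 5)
Your overall architecture --- reduce everything to the normal $p$-piece $1+\mathfrak I(P)$ and the abelian quotient $F[G/P]$, identify $M=G/P$, and compute the units of $FM$ through the field decomposition of $FD$ and the triviality of units over the orderable group $G/\mathfrak t(G)$ --- is sound, and for parts (i) and (iii)--(v) it is essentially the paper's own route (the paper phrases (iv)--(v) as a crossed product $S(K,FD)$ with $K\cong M/D$ ordered and invokes the triviality of units in twisted group rings of ordered groups over fields; your ``each component is a domain with trivial units'' is the same fact, and your reducedness argument identifying $1+\mathfrak I(P)$ as the full Sylow $p$-subgroup is if anything more explicit than the paper's).

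The gap is exactly where you flagged it, in (ii), and it is a genuine one: the termination of the group-commutator sequence $c_1=(a,g)$, $c_{i+1}=(c_i,g)$ together with the nilpotence of $\mathfrak I(P_0)$ does \emph{not} by itself yield $(\rho_g-1)^N(a)=0$. The natural induction would be $(\rho_g-1)^k(a)\in a\,\mathfrak I(P_0)^k$; but writing $(\rho_g-1)^k(a)=ay$ gives $(\rho_g-1)^{k+1}(a)=a\bigl(c_1(\rho_g(y)-y)+(c_1-1)y\bigr)$, and the term $\rho_g(y)-y$ only lands in $\mathfrak I(P_0)^{k+1}$ if $g$ already acts trivially on $\mathfrak I(P_0)^k/\mathfrak I(P_0)^{k+1}$ --- which, for $g$ of order prime to $p$, is essentially equivalent to $g$ centralizing $P_0$, i.e.\ to the statement you are trying to prove. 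The paper closes this by a different device (its Lemma~\ref{L:1}): it manufactures explicit square-zero elements $x=\widehat a\,g(a-1)$ and $eg$ and computes $(1+x,a,m)=1+x(a-1)^m$, so that the Engel condition on the whole unit group, not merely on $G$, kills the commutator. Your route can, however, be repaired and in fact shortened: $P_0\langle g\rangle$ is a finite subgroup of the Engel group $V(FG)$, hence nilpotent by Zorn's theorem, hence its $p$-part and $q$-part commute elementwise; thus $g$ centralizes $c_1$, so $g^{-k}ag^{k}=ac_1^{k}$ and $c_1^{|g|}=1$, and since $c_1\in G'$ is a $p$-element with $p\nmid|g|$ we get $c_1=1$ directly. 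Once this is in place the semisimplicity of $\rho_g$ and the operator $(\rho_g-1)^N$ are no longer needed. (This repaired argument uses only that $G$ itself is Engel; the paper's Lemma~\ref{L:1} is stronger and is required elsewhere, e.g.\ in Theorem~\ref{T:1}, where $G'$ is not assumed to be a $p$-group.)
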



The next two theorems completely  describe  groups $G$  with  $V(FG)$   locally nilpotent. Some special  cases  of   Theorem 3  were  proved by I.~Khripta (see \cite{Khripta_2}) and M.~Ramezan-Nassab (see Theorem 1.2 in \cite{Ramezan-Nassab_2} and Corollary 1.3 and Theorem 1.4 in \cite{Ramezan-Nassab_1}).

\begin{theorem}\label{T:3}
Let $FG$ be a modular group algebra  of a group $G$ over the field $F$  of positive characteristic $p$.
The group $V(FG)$ is locally nilpotent if and only if $G$ is locally nilpotent and
 $G'$ is a $p$-group.
\end{theorem}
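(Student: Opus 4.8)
The plan is to prove both implications by reducing every assertion about $V(FG)$ to finitely generated subgroups of $G$ and then feeding the outcome into Khripta's nilpotency criterion \cite{Khripta_1,Khripta_2}. Throughout I would use two standing facts: a locally nilpotent group is Engel (see \cite{Traustason}), and in a locally nilpotent group the set of $p$-elements is a characteristic subgroup, so the torsion part is the restricted direct product of its primary components. Consequently, to prove $G'$ is a $p$-group it is enough to prove that each commutator $(x,y)$ is a $p$-element.

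For the sufficiency direction, assume $G$ is locally nilpotent with $G'$ a $p$-group. Since local nilpotency is detected on finitely generated subgroups, it suffices to show every finitely generated $U\le V(FG)$ is nilpotent. Each generator of $U$ has finite support, so $U\le V(FH)$ for the finitely generated subgroup $H\le G$ generated by all these supports. Now $H$ is finitely generated and, being a subgroup of the locally nilpotent $G$, is nilpotent; hence $H'$ is finitely generated. As $H'\le G'$ is a $p$-group and a finitely generated torsion nilpotent group is finite, $H'$ is a \emph{finite} $p$-group. Khripta's theorem then applies: with ${\rm char}(F)=p$, a nilpotent $H$ whose derived subgroup is a finite $p$-group has $V(FH)$ nilpotent (if $H$ has no element of order $p$ then $H'=\gp{1}$ and $V(FH)$ is commutative). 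Thus $U\le V(FH)$ is nilpotent and $V(FG)$ is locally nilpotent.

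For necessity, suppose $V(FG)$ is locally nilpotent. Because $G$ embeds in $V(FG)$ as the trivial normalized units, each finitely generated subgroup of $G$ is nilpotent, so $G$ is locally nilpotent. Now fix $x,y$, set $H=\gp{x,y}$ and $c=(x,y)$, and assume $c$ is not a $p$-element; I seek a contradiction. If $c$ has a nontrivial $q$-component with $q\ne p$, I would pass to a finite non-abelian section of $H$ on which the group algebra is semisimple by Maschke's theorem; its Wedderburn decomposition then has a noncommutative component, whose unit group is not locally nilpotent (for instance a linear Engel group is locally nilpotent by \cite{Suprunenko_Garashchuk}, while $\mathrm{GL}_n$, $n\ge2$, is not), contradicting the local nilpotency inherited by $V(FH)$.

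The hard part is the remaining case, where $c$ has infinite order and the torsion-free part of $G'$ must be shown to clash with the modular structure; this is exactly where the modular hypothesis is indispensable, since without $p$-torsion $FH$ is an ordered-group domain with only trivial units and $V(FH)=H$ is already nilpotent. Here I would enlarge $H$ to a finitely generated nilpotent $K$ containing an element of order $p$; as a nontrivial normal $p$-subgroup of a nilpotent group meets the centre, $K$ has a central element $s$ of order $p$. Then $u=1+(x-1)(s-1)$ is a unipotent normalized unit, and its iterated conjugates $y^{-n}uy^{n}=1+\big(x^{y^{n}}-1\big)(s-1)$ involve the pairwise distinct elements $x^{y^{n}}\equiv x\,c^{\,n}\ (\mathrm{mod}\ \gamma_3(K))$, which are distinct because $c$ has infinite order. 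Hence $\gp{y,u}$ has a non-terminating lower central series and is not nilpotent, contradicting local nilpotency. With both cases excluded, $c$ is a $p$-element, so $G'$ lies in the subgroup of $p$-elements of $G$ and is a $p$-group, completing the proof.
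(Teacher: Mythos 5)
Your sufficiency argument is sound and is in fact a shortcut compared with the paper: reducing to a finitely generated $H\le G$ with $H'$ a finite $p$-group and quoting Khripta's nilpotency criterion \cite{Khripta_1} (plus the degenerate case where $H$ has no $p$-element, so $H'=\gp{1}$) does the job, whereas the paper reproves this step via the structure of $V(FH)/\big(1+\mathfrak I(P)\big)$ and an extension lemma from \cite{Bovdi_Khripta_2}. The only caveat is architectural: the paper derives Khripta's theorem as Corollary~\ref{C:1} \emph{from} Theorems~\ref{T:2} and~\ref{T:3}, so your route would make that derivation circular inside this paper, although it is not logically wrong as a free-standing proof.

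The necessity direction has two genuine gaps. First, in the case where $c=(x,y)$ has a nontrivial $q$-part, $q\ne p$, you pass to a finite non-abelian $p'$-section $\bar H$ and work in the semisimple algebra $F\bar H$; but the map $V(FH)\to V(F\bar H)$ need not be surjective when the kernel of $H\to\bar H$ is not a locally finite $p$-group (units do not lift along a non-nil kernel), so $V(F\bar H)$ does not ``inherit'' local nilpotency, and the non-nilpotent subgroup of some $GL_n(F_i)$ that you locate there does not yield one inside $V(FG)$. The group $H=\gp{x,y\mid (x,y)=z\ \text{central},\ z^q=1}$ shows the difficulty: it has no finite non-abelian subgroup, and every finite non-abelian section has infinite kernel. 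Second, and more seriously, your infinite-order case rests on the inference that pairwise distinct conjugates $u^{y^n}=1+(x^{y^n}-1)(s-1)$ force $\gp{y,u}$ to have a non-terminating lower central series; this is false as stated --- in the discrete Heisenberg group the conjugates $x^{y^n}=xz^n$ are also pairwise distinct, yet the group is nilpotent of class $2$. What is actually required is the computation of the iterated Engel commutators: taking $\widehat s=1+s+\cdots+s^{p-1}$ (central and square-zero, which is cleaner than $s-1$) one finds $(1+x\widehat s,\,y,\,n)=1+\widehat s\sum_{i=0}^{n}(-1)^i\binom{n}{i}x^{y^{n-i}}$, and the crux of the paper's proof is that for $n=p^m$ this collapses to $1+\widehat s\,(x^{y^{p^m}}-x)$, forcing $(x,y^{p^m})\in\gp{s}$, hence $y^{p^{m+1}}\in C_G(x)$, after which Schur's theorem shows that $\gp{x,y,1+x\widehat s}'$ is a finite $p$-group. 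That binomial collapse handles both of your cases at once, is precisely where the modular hypothesis enters, and is absent from your argument; without it (or the idempotent construction of Lemma~\ref{L:1} for the torsion case) the necessity direction is not established.
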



\begin{theorem}\label{T:4}
Let $FG$ be a non-modular group algebra   of characteristic $p\geq 0$.
The group $V(FG)$ is  locally nilpotent if and only if $G$ is a locally nilpotent group,  $\mathfrak{t}(G)$  is an abelian group  and one of the following conditions holds:
\begin{itemize}
\item[(i)]  $\mathfrak{t}(G)$  is a central subgroup;

\item[(ii)]   $F$ is a prime field of characteristic $p=2^t-1$,\quad the exponent  of $\mathfrak{t}(G)$
divides $p^2-1$ and   $g^{-1}ag=a^p$ for all $a\in \mathfrak{t}(G)$ and  $g\in G\setminus C_G(\mathfrak{t}(G))$.
\end{itemize}
\end{theorem}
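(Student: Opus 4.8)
The plan is to reduce local nilpotency of $V(FG)$ to the known classification of when $V(FH)$ is \emph{nilpotent} for finitely generated $H$ (the non-modular theorem of Khripta, \cite{Khripta_2}), and to prove the reverse implication by a direct finitely generated analysis. The basic reduction is elementary: if $u_{1},\dots,u_{k}\in V(FG)$, then writing each $u_{i}$ and each $u_{i}^{-1}$ with finite support and letting $H\le G$ be generated by all these supports, one has $u_{1},\dots,u_{k}\in U(FH)\cap V(FG)=V(FH)$ with $H$ finitely generated. Thus every finitely generated subgroup of $V(FG)$ lies in some $V(FH)$ with $H\le G$ finitely generated, while $G\hookrightarrow V(FG)$. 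In particular, if $V(FG)$ is locally nilpotent then $G$ is locally nilpotent, so its torsion elements form a subgroup $\mathfrak{t}(G)$ (a $p'$-group when $p>0$, by non-modularity); and, conversely, to prove local nilpotency it suffices to prove that each such $V(FH)$ is nilpotent.

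For the sufficiency I would argue along this reduction. Assuming $G$ locally nilpotent with $\mathfrak{t}(G)$ abelian and one of (i), (ii), let $H\le G$ be finitely generated. Then $H$ is nilpotent and $\mathfrak{t}(H)=H\cap\mathfrak{t}(G)$ is a finite abelian group, on which $H$ acts either trivially (in case (i), since $\mathfrak{t}(H)\le\mathfrak{t}(G)$ is central) or by $a\mapsto a^{p}$, with the exponent of $\mathfrak{t}(H)$ dividing $p^{2}-1$, $F=\mathbb{F}_{p}$ and $p=2^{t}-1$ (in case (ii), these conditions being inherited from $G$). In either case $H$ satisfies Khripta's hypotheses, so $V(FH)$ is nilpotent; hence every finitely generated subgroup of $V(FG)$ is nilpotent and $V(FG)$ is locally nilpotent.

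For the necessity I would \emph{not} pass through nilpotency of $V(FH)$ (which is a priori stronger than local nilpotency) but rather reprove the structural constraints directly, so that they are tested on finitely generated subgroups. That $G$ is locally nilpotent and $\mathfrak{t}(G)$ a subgroup is already noted. To see that $\mathfrak{t}(G)$ is abelian, take any finite $K\le\mathfrak{t}(G)$: since $FK$ is semisimple, its Wedderburn components are matrix algebras $M_{n}(D)$, and a matrix algebra of degree $n\ge 2$ over any field, as well as any noncommutative finite-dimensional division algebra, has a unit group that is not locally nilpotent. As $V(FK)\le V(FG)$ is locally nilpotent, all components must be commutative fields, so $FK$ is commutative and $K$ is abelian; hence $\mathfrak{t}(G)$ is abelian. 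If the conjugation action of $G$ on $\mathfrak{t}(G)$ is trivial we are in case (i); otherwise I would fix $a\in\mathfrak{t}(G)$ and $g\in G$ with $a^{g}\ne a$ and study the locally nilpotent group $V\big(F\gp{a,g}\big)$, where $F\gp a$ splits as a product of fields on which $g$ acts as a field automorphism.

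The hard part is exactly this last analysis and its globalization. Working inside the twisted semisimple algebra $F\gp a * \gp g$, I expect local nilpotency of its unit group to force: that the field automorphism induced by $g$ is the Frobenius $x\mapsto x^{p}$, whence $a^{g}=a^{p}$; that $F$ is the prime field $\mathbb{F}_{p}$ and every field extension occurring has degree at most $2$, whence the exponent of $\mathfrak{t}(G)$ divides $p^{2}-1$ and $g^{2}$ centralizes $\mathfrak{t}(G)$; and, combining the resulting order-two action with local nilpotency of $G$ (which forces $a\mapsto a^{p}$ to act unipotently, so that every prime dividing the exponent of $\mathfrak{t}(G)$ divides $p-1$), that $p+1$ is a power of $2$, i.e. $p=2^{t}-1$ is a Mersenne prime. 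The delicate points are the explicit commutator computation that isolates the Frobenius and the prime field, and then checking that the data $(F,p,\text{action})$ obtained from one pair $(a,g)$ is forced to coincide for all pairs, so that the local conclusions assemble into the single global alternative (ii). As a cross-check on the structural list, one may note that local nilpotency implies the Engel property, so A.~Bovdi's solution of the non-modular Engel problem (\cite{Bovdi_engel}, Theorem~1.1) already yields $\mathfrak{t}(G)$ abelian together with the dichotomy (i)/(ii), reducing the genuinely new content to the sufficiency direction above.
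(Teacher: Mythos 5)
Your proposal is correct in substance and, at the points that actually carry the proof, coincides with the paper's: the published proof is two citations --- for necessity, locally nilpotent $\Rightarrow$ Engel, and Theorem~1.1 of \cite{Bovdi_engel} (the non-modular Engel classification) gives exactly the list (i)/(ii); for sufficiency, a locally nilpotent group is a u.~p.\ group, and Theorem~2 of \cite{Khripta_2} together with Theorem~1.1 of \cite{Bovdi_engel} applies. Your sufficiency argument rests on the same theorem of Khripta, but is routed through the standard reduction to finitely generated subgroups: every finitely generated subgroup of $V(FG)$ sits in some $V(FH)$ with $H$ finitely generated nilpotent and $\mathfrak t(H)$ finite abelian, conditions (i)/(ii) are inherited by $H$ (your observation that $g\in H\setminus C_H(\mathfrak t(H))$ forces $g\notin C_G(\mathfrak t(G))$ is exactly the point that needs checking), and Khripta then gives nilpotency of $V(FH)$. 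This is arguably more careful than the paper's one-line appeal, since Khripta's statement concerns nilpotency and cannot be applied to $G$ itself without some such localization. Where you diverge is necessity: you propose to re-derive the dichotomy (i)/(ii) directly from a Wedderburn/crossed-product analysis of $F\gp{a,g}$, and that part is only a sketch --- the decisive steps (that the induced field automorphism is the Frobenius, that $F$ is prime and all residue extensions have degree at most $2$, that $p$ is a Mersenne prime, and that the data obtained from different pairs $(a,g)$ agree) are flagged with ``I expect'' and not proved, so taken on its own this route has a genuine gap. It is, however, harmless to the correctness of your proposal, because your closing remark --- that local nilpotency implies the Engel property, so Theorem~1.1 of \cite{Bovdi_engel} already yields $\mathfrak t(G)$ abelian and the alternative (i)/(ii) --- is precisely the paper's argument and renders the direct analysis unnecessary.
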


As a  consequence of  Theorems \ref{T:2} and \ref{T:3} we obtain the classical result of I.~Khripta.

\begin{corollary}\label{C:1}
Let $FG$ be a modular group algebra   of  positive characteristic $p$. The group
$V(FG)$ is  nilpotent if and only if $G$ is nilpotent and $G'$  is a finite $p$-group.

The structure of $V(FG)$  is the following: the group $\mathfrak{t}(G)=P\times D$, where  $P$ is the   $p$-Sylow subgroup   of $G$,  $D$ is a central subgroup,  $1+\mathfrak I (P)$ is the $p$-Sylow subgroup of $V(FG)$ and $V'\leq 1+\mathfrak I (G')$. Moreover, if $D$ is a finite abelian group, then we have the following isomorphism between abelian groups
\[
\begin{split}
V(FG)/\big(1+\mathfrak{I}(P)\big) \cong  V(FD)  \times (\underbrace{G/\mathfrak{t}(G)\times\cdots\times  G/\mathfrak{t}(G)}_n),\\
\end{split}
\]
where $n$ is the  number of summands  in the  decomposition of $FD$ into a direct sum of fields.
\end{corollary}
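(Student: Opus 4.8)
The plan is to deduce both the equivalence and the entire structural description by feeding the nilpotency of $V(FG)$ into Theorems~\ref{T:2} and~\ref{T:3}, so that the only genuinely new work is the passage from local nilpotency to nilpotency, i.e. the finiteness of $G'$.

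First I would dispose of the group-theoretic equivalence up to that finiteness. A nilpotent group is locally nilpotent, so if $V(FG)$ is nilpotent then Theorem~\ref{T:3} gives at once that $G$ is locally nilpotent and that $G'$ is a $p$-group; since the trivial units form an isomorphic copy of $G$ inside $V(FG)$, the group $G$ is a subgroup of a nilpotent group and is therefore itself nilpotent. In the other direction, if $G$ is nilpotent with $G'$ a finite $p$-group, then $G$ is in particular locally nilpotent and $G'$ is a $p$-group, so Theorem~\ref{T:3} returns the local nilpotency of $V(FG)$. Thus the content of the corollary reduces to two linked assertions: that a nilpotent $V(FG)$ forces $G'$ to be \emph{finite}, and that, conversely, finiteness of $G'$ (with $G$ nilpotent) upgrades the local nilpotency of $V(FG)$ to genuine nilpotency.

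I expect the finiteness of $G'$ to be the main obstacle. The natural route is to argue by contradiction and reduce to finitely generated subgroups: a torsion nilpotent group is locally finite, so an infinite $G'$ would contain finite $p$-subgroups of unbounded order, and by collecting the finitely many commutators needed to generate such a subgroup one produces finitely generated subgroups $H\le G$ with $|H'|$ arbitrarily large. Each $V(FH)$ is a subgroup of $V(FG)$ and hence nilpotent of class at most $\mathrm{cl}(V(FG))$, so the crux is a lower bound showing that the nilpotency class of $V(FH)$ grows without bound as $|H'|\to\infty$; equivalently, that a bounded nilpotency class of $V(FG)$ forces $|G'|$ to be bounded. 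This quantitative step (most cleanly obtained by passing through Lie nilpotency: a nilpotent $V(FG)$ makes $FG$ Lie nilpotent, whence $G'$ is a finite $p$-group) is where the real difficulty lies. For the converse upgrade I would use that $G'$ finite makes $\mathfrak{I}(G')$ a nilpotent ideal, so that $1+\mathfrak{I}(G')$ is nilpotent of bounded class; combined with $V'\le 1+\mathfrak{I}(G')$ from Theorem~\ref{T:2}(i) and the control of the lower central series of $V(FG)$ by the Lie powers of $\mathfrak{I}(FG)$, this yields the nilpotency of $V(FG)$.

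Once nilpotency is in hand, the structural statements are read directly off Theorem~\ref{T:2}. Its hypotheses are met: $G'$ is a (now finite, hence locally finite) $p$-group, the Sylow $p$-subgroup $P$ is normal because $G$ is nilpotent, and $V(FG)$, being nilpotent, is Engel. Parts (i)--(iii) then give $V'\le 1+\mathfrak{I}(G')$, the identification $1+\mathfrak{I}(P)=Syl_p(V(FG))$, the centrality of $D=\times_{q\ne p}Syl_q(G)$, and the decomposition $\mathfrak{t}(G)=P\times D$. Finally, when $D$ is finite, composing the isomorphism $V(FG)/(1+\mathfrak{I}(P))\cong V(FM)$ of part (iv) with the decomposition of $V(FM)$ in part (v) yields exactly the displayed isomorphism of abelian groups, with $n$ the number of field summands of $FD$. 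This last step is purely a matter of chaining the two isomorphisms of Theorem~\ref{T:2}, so no new difficulty arises there.
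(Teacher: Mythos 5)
The paper offers no written proof of Corollary~\ref{C:1}: it is stated as a consequence of Theorems~\ref{T:2} and~\ref{T:3}, and your overall architecture --- get local nilpotency and ``$G'$ is a $p$-group'' from Theorem~\ref{T:3}, get $G$ nilpotent from the embedding $G\le V(FG)$, and read the entire structural paragraph off Theorem~\ref{T:2}(i)--(v) by chaining the isomorphisms of (iv) and (v) --- matches that intent. That part of your proposal is fine, including the verification that Theorem~\ref{T:2}'s hypotheses hold ($G'$ a locally finite $p$-group, $Syl_p(G)$ normal in the nilpotent group $G$, $V(FG)$ Engel).

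The genuine gap is exactly where you flag ``the real difficulty'': both remaining implications are asserted, not proved. For the forward direction you reduce to showing that a bounded nilpotency class of $V(FG)$ bounds $|G'|$, and then dispose of this by the parenthetical claim that $V(FG)$ nilpotent makes $FG$ Lie nilpotent, whence $G'$ is a finite $p$-group. But that implication is the substantive content of Khripta's theorem (the very ``classical result'' the corollary is meant to recover); invoking it as a known fact makes the argument circular in spirit, and nothing in the paper's Theorems~\ref{T:1}--\ref{T:4} or Lemma~\ref{L:1} supplies it. The computation inside the proof of Theorem~\ref{T:3} does give, for $V(FG)$ of class at most $p^m$, that $h^{p^{m+1}}\in\zeta(G)$ for all $h$, i.e.\ $G/\zeta(G)$ has bounded exponent --- but a nilpotent $p$-group of bounded exponent need not be finite, so this does not yield $|G'|<\infty$ without further work, and you never produce the promised lower bound on $\mathrm{cl}(V(FH))$ in terms of $|H'|$. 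The converse upgrade has the same defect: ``the control of the lower central series of $V(FG)$ by the Lie powers of $\mathfrak{I}(FG)$'' is again the Lie-nilpotency machinery ($G$ nilpotent with $G'$ a finite $p$-group implies $FG$ Lie nilpotent, hence $U(FG)$ nilpotent), cited rather than derived; note that $V'\le 1+\mathfrak{I}(G')$ with $1+\mathfrak{I}(G')$ nilpotent and $V/(1+\mathfrak{I}(G'))$ abelian does not by itself force $V$ nilpotent. To close the corollary honestly you must either import Khripta's theorem (and the Passi--Passman--Sehgal characterization of Lie nilpotent $FG$) as an explicit external citation, or supply the quantitative argument; as written, the crux is named but not proved.
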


\section{Proofs}
We start our proof with the following important

\begin{lemma}\label{L:1}
Let $FG$ be the group algebra of a group $G$  such that    $V(FG)$ is an  Engel group. If  $H$ is  a subgroup  of  $\mathfrak{t}(G)$ and  $H$
does not contain an  element of order  ${\rm char}(F)$,  then  $H$ is abelian, each subgroup of $H$ is normal in $G$ and each  idempotent of $FH$ is central in $FG$.
\end{lemma}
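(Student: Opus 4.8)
The plan is to exploit the fact that the hypothesis forces $H$ to be a $p'$-group: if some $a\in H$ had order divisible by $p$, a suitable power of $a$ would have order exactly $p$, contrary to assumption. Hence for every $a\in H$ the conjugation map $\sigma_a\colon x\mapsto a^{-1}xa$ is an $F$-algebra automorphism of $FG$ with $\sigma_a^{\,\mathrm{ord}(a)}=\mathrm{id}$ and $\mathrm{ord}(a)$ invertible in $F$; thus $\sigma_a$ is semisimple, and after extending scalars to $\overline F$ it is diagonalizable with eigenvalues $n$-th roots of unity, where $n=\mathrm{ord}(a)$. The single arithmetic fact I will use repeatedly is that, because $p\nmid n$, one has $\zeta-1\ne 0$ in $\overline F$ for every root of unity $\zeta\ne 1$; this is precisely where the exclusion of elements of order $p$ enters.

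First I would prove the following exact linearization, which is the engine of the whole argument. Fix $a\in H$, put $\sigma=\sigma_a$, let $e\in F\langle a\rangle$ be an idempotent, and for $g\in G$ set $\nu=(1-e)ge$ and $u=1+\nu$. Then $\nu^2=(1-e)g\,e(1-e)\,ge=0$ and $\varepsilon(\nu)=0$ (as $\varepsilon(e)\in\{0,1\}$), so $u\in V(FG)$ is a unipotent unit with $u^{-1}=1-\nu$. Because $e\in F\langle a\rangle$ we have $\sigma(e)=e$, hence $\sigma^i(\nu)=(1-e)\,\sigma^i(g)\,e$ keeps the same left/right idempotent frame; consequently $\sigma^i(\nu)\,\sigma^j(\nu)=0$ for all $i,j$ (the middle factor is $e(1-e)=0$). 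This makes every cross term disappear, and an immediate induction gives the clean formula
\[
(u,\underbrace{a,\dots,a}_{k})=1+(\sigma-1)^{k}(\nu)\qquad(k\ge 1).
\]
Since $V(FG)$ is Engel, the left-hand side is $1$ for some $k$, so $(\sigma-1)^{k}(\nu)=0$. Decomposing $\nu=\sum_{\zeta}\nu_\zeta$ into $\sigma$-eigencomponents over $\overline F$ and using $(\zeta-1)^{k}\nu_\zeta=0$ together with $\zeta-1\ne 0$ for $\zeta\ne 1$, I conclude $\nu_\zeta=0$ for all $\zeta\ne 1$, i.e. $\sigma(\nu)=\nu$; equivalently, $u$ commutes with $a$, and therefore with the whole commutative algebra $F\langle a\rangle$.

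From this I would read off the structural conclusions in the order normality, abelianness, idempotent centrality. Taking $u=1+(1-e)ge$ and its mirror $u^{*}=1+eg(1-e)$ (both admissible, with $e\in F\langle a\rangle$), commutation with $e$ forces $(1-e)ge=0$ and $eg(1-e)=0$, whence $ge=ege=eg$; as $g\in G$ was arbitrary, every idempotent of $F\langle a\rangle$ is central in $FG$. Applying this to the averaging idempotent $\widehat{\langle a\rangle}=\tfrac1n\sum_{i}a^{i}$ (an idempotent precisely because $p\nmid n$) and using linear independence of distinct group elements, the centrality $g^{-1}\widehat{\langle a\rangle}g=\widehat{\langle a\rangle}$ yields $g^{-1}\langle a\rangle g=\langle a\rangle$; since this holds for every cyclic subgroup, every subgroup of $H$ is normal in $G$, giving the second assertion. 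In particular $H$ is a Dedekind group, hence abelian or Hamiltonian. Once the Hamiltonian case is excluded, $H$ is abelian, $FH$ is commutative, and then \emph{every} $b\in H$ fixes \emph{every} idempotent $e\in FH$ under conjugation; rerunning the linearization and the bicyclic computation with an arbitrary idempotent $e\in FH$ now shows that each such $e$ is central in $FG$, which is the third assertion.

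The hard part will be excluding the Hamiltonian possibility, and this is where I expect the real work to lie. If $H$ were Hamiltonian it would contain a copy of $Q_8$, and the finite-dimensional semisimple algebra $FQ_8$ (semisimple since $p\ne 2$, which is automatic here) would have a noncommutative Wedderburn component $B$ over its center $Z$, with central idempotent $e_0$. Embedding $B^{\times}\hookrightarrow U(FQ_8)\subseteq U(FG)$ via $b\mapsto(1-e_0)+b$ exhibits $B^{\times}$ as a subgroup of the Engel group $U(FG)=V(FG)\times U(F)$, so $B^{\times}$ is itself Engel; being linear, it is then locally nilpotent by the theorem of Suprunenko–Garashchuk quoted in the introduction. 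But a locally nilpotent linear group is triangularizable, hence cannot act irreducibly in dimension greater than one, whereas $B$ — being either $M_2(Z)$ or a noncommutative division algebra over $Z$ — forces exactly such an irreducible action. This contradiction rules out $Q_8$ and completes the argument. The one delicate point inside this step is the division-algebra subcase (which cannot occur when $F$ is finite, by Wedderburn's little theorem, but must be addressed for general $F$), and I would dispose of it through the same triangularizability obstruction, applied to the irreducible action of $\Delta^{\times}$ on $\Delta$.
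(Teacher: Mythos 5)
Your linearization $(u,a,\ldots,a)=1+(\sigma-1)^{k}(\nu)$ for $\nu=(1-e)ge$, the semisimplicity of $\sigma=\sigma_a$ forcing $\sigma(\nu)=\nu$, and the resulting deductions (centrality of the idempotents of $F\gp{a}$, hence normality of every cyclic subgroup, hence the Dedekind dichotomy, and finally centrality of all idempotents of $FH$ once $H$ is known to be abelian) are all correct, and they follow a genuinely different route from the paper: the paper iterates commutators of $1+\widehat{a}g(a-1)$ and $1+eg$ against \emph{right multiplication} by $(a-1)$ and $(b-1)$ and kills the surviving term with the averaging idempotents $\frac{1}{|W|}\widehat{W}$ and $\frac{1}{|b|}\widehat{\gp{b}}$, whereas you diagonalize the conjugation operator; your version avoids the reduction to primitive idempotents and the finite subgroup $T$ entirely. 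Up to and including the conclusion that $H$ is abelian or Hamiltonian, the argument is sound.

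The gap is in the exclusion of the Hamiltonian case. Your contradiction rests on the assertion that a locally nilpotent linear group is triangularizable and therefore cannot act irreducibly in dimension greater than one. That assertion is false, and the canonical counterexample is exactly the group you are trying to eliminate: $Q_8\subset GL_2(\mathbb{C})$ is a finite nilpotent group acting irreducibly on a two-dimensional space. (Triangularizability in the Lie--Kolchin style needs Zariski-connectedness; local nilpotency alone gives nothing of the sort.) So the chain ``Engel $\Rightarrow$ locally nilpotent $\Rightarrow$ triangularizable $\Rightarrow$ no irreducible module of dimension $>1$'' breaks at the second arrow, in the split case and \emph{a fortiori} in the division-algebra case, where you explicitly propose to reuse the same obstruction. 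The conclusion you want is true and your setup can be salvaged without it: the element $\omega=\tfrac12(-1+i+j+k)$ of the noncommutative component $B$ satisfies $\omega^{3}=1$ and conjugates $i\mapsto j\mapsto k$, so $\gp{i,j,\omega}$ (embedded in $U(FG)$ via $b\mapsto (1-e_0)+b$) is a finite non-nilpotent group of order $24$ inside an Engel group, contradicting Zorn's theorem that finite Engel groups are nilpotent --- no appeal to Suprunenko--Garashchuk or to irreducibility is needed. The paper itself sidesteps the whole issue by citing Lemma 1.1 of Bovdi--Khripta for the Hamiltonian case; as written, your replacement for that citation does not close.
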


\begin{proof} If $a\in \mathfrak{t}(G)$ and $g \in G\setminus N_{G}(\gp{a})$, then  $x=\widehat{a}g(a-1)\not=0$  and   $1 + x\in V(FG)$. A straightforward calculation shows that
\[
(1+x, a, m) =1+x(a -1)^m, \qquad (m\geq 1).
\]
Since $V(FG)$ is an Engel group,  $x(a-1)^s\ne 0$ and $x(a -1)^{s+1}=0$ for a suitable $s\in \mathbb{N}$. It follows
that $x(a -1 )^sa^i =x(a-1)^s$ for each  $i\geq 0$ and
\[
x(a-1)^s\cdot |a|=x(a-1)^s(1+a+a^2+\cdots +a^{|a|-1})=0.
\]
Since   ${\rm char}(F)$ does   not divide $|a|$, we have that
$x(a -1 )^s=0$, a contradiction. Hence  every finite cyclic subgroup of $H$ is
normal in $G$, so  $H$ is either abelian or hamiltonian.

If $H$ is  a hamiltonian group,  then using the same proof as in the second part of
Lemma 1.1 of  \cite{Bovdi_Khripta_3} (see  p.122), we obtain a contradiction.

Hence  $H$ is  abelian and each of its subgroups is normal in $G$.

We claim that all idempotents of $LT$ are central  in  $L\gp{T,g}$, where $T$ is  a  finite abelian subgroup of $H$, $g\in G$ and $L$ is   a prime  subfield of the field $F$.

Assume on the contrary that there exist  a primitive idempotent $e\in LT$ and $g\in G$ such that $geg^{-1}\ne e$. The element  $b=g^{-1}a^{-1}ga \neq 1$ for some $a \in T$ and
\[
W=\gp{c^{-1}(g^{-1}cg)\mid c\in T}=\gp{T,g}'\vartriangleleft \gp{T,g}
\]
is a  non-trivial finite abelian subgroup of $T$.

Obviously each subgroup of $T\leq H$ is normal in $\gp{T,g}$,  so the idempotent $f= \frac{1}{|W|}\widehat{\gp{W}}\in L[T]$ is  central in $L\gp{T, g}$ and can be expressed as $f=f_1+\cdots+f_s$ in which  $f_1,\ldots, f_s$ are  primitive and  mutually
orthogonal idempotents of the  finite dimensional semisimple algebra $L[T]$.

The idempotent    $e$ does  not appear in the decomposition of $f$.  Indeed,   otherwise we have $ef=e$. If $e=\sum_i\alpha_it_i$, where $\alpha_i\in L$ and $t_i\in T$, then
\[
g^{-1}(ef)g=g^{-1}egf=\sum_i\alpha_ig^{-1}t_igf=\sum_i\alpha_it_i (t_i, g)f=ef
\]
so $e=fe$ is central, a contradiction.

If $ef\not=0$, then $ef=e$, again a contradiction. Thus  $ef=0$.

Consider  $f_*= \frac{1}{|b|}\widehat{\gp{b}}\in L[T]$. Since $\gp{b}\subseteq W$, the idempotent
$f_*$    appears in the decomposition of $f$, so   $ef_*=0$. Furthermore    $geg^{-1}\in L[T]$ is primitive as an automorphism's  image  of the primitive idempotent $e$, so $ege=e(geg^{-1})g=0$.
Evidently  $(1 + eg)^{-1}=1- eg$ and
\[
\begin{split}
(1+eg,a)&=(1-eg)(1+ea^{-1}ga)\\&=(1-eg)(1+egb)=1+eg(b-1).
\end{split}
\]
Now an easy induction on $n\geq 1$ shows that
$(1 +eg,a,n) =1+eg(b -1)^n$.  However  $V(FG)$ is Engel, so there exists $m$ such that
\[
eg(b - 1)^m\ne 0\qquad  \text{and} \qquad  eg(b -1)^{m+1}= 0.
\]
Thus $eg(b-1)^mb^i = eg(b -1)^m$ for any $i\geq 0$, which leads to the
contradiction
\[
\begin{split}
eg(b -1)^m = eg(b -1)^m f_*= eg(b -1)^{m-1}\big((b-1) f_*\big)= 0.
\end{split}
\]
Therefore, all idempotents of $FH$ are central  in  $FG$.
\end{proof}

\smallskip

\begin{proof}[\underline{Proof of Theorem \ref{T:1}}]
Let $N_*$ be the (finite) set of all nilpotent elements of $FG\setminus 0$. If $x\in N_*$  then  the subset of nilpotent elements $\{\lambda x\mid \lambda\in F\}$ is finite, so $F$ is a finite field of ${\rm char}(F)=p$.

Assume that there exist $u\in N_*$ and $g\in G\setminus \mathfrak{t}(G)$  such that $gu=ug$. Since the right annihilator $L$ of the left ideal $\frak{I}_l(\gp{g})=\gp{g^i-1\mid i\in \mathbb{Z}}_{FG}$
is different from zero  if and only if $\gp{g}$ is a finite group (see Proposition 2.7, \cite{Bovdi_book}, p.9), the set
$\{ u(g^i-1)\mid i\in \mathbb{Z}\}$ is an infinite subset of $N_*$, a contradiction.

Let $0\not=u\in N_*$. Clearly  $g^{-i}ug^i\not=0$ and  $g^{-i}ug^i\in N_*$ for any $g\in G\setminus \mathfrak{t}(G)$ and $i\in \mathbb{Z}$. Since $N_*$ is a finite set,  at least for one $i\in \mathbb{Z}$ there exists  $j\in \mathbb{Z}$  ($j\not=i$) such that $g^{-i}ug^i=g^{-j}ug^j$, so $g^{i-j}u=ug^{i-j}$,  a contradiction.
Consequently $G=\mathfrak{t}(G)$, i.e.  $G$ is a torsion group.

If $Syl_p(G)=\gp{1}$, then each subgroup  of $\mathfrak{t}(G)(=G)$ is abelian and normal in $G$ by Lemma \ref{L:1}.
Thus $FG$ is a direct sum of fields, so $N_*=\varnothing$, which is impossible.

Let   $P=Syl_p(G)\not=\gp{1}$.  Evidently    $\{h-1\mid h\in P\}$ is a finite subset of nilpotent elements,  so  $P$ is a finite subgroup in $G$. Assume that  $P$ is not normal in $G$. Since $P<V(FG)$ is Engel and nilpotent, some element of $P$ must have a conjugate  $w\in N_G(P)\setminus P$ (see \cite{Plotkin_book}, Lemma V.4.1.1, p.379 [p.307 in the English translation]). Consequently $P\lneqq\gp{P, w}$ and $\gp{P, w}$ is a finite $p$-group, a contradiction.

Hence $P$ is a  normal subgroup in $G$ and  the subset  of  nilpotent elements
\[
\{(h-1)g\lambda \mid h \in P,\;  g\in G,\; \lambda \in F \}
\]
is finite, so $G$ is  finite as well.
Since every finite Engel group is nilpotent by Zorn's theorem (see \cite{Robinson}, 12.3.4,  p.372),  $G$ is a finite nilpotent group which is a direct product of its Sylow subgroups (see  \cite{Hall_book}, 10.3.4, p.176) and  $G'$ is a finite $p$-group (see \cite{Bovdi_engel}, Theorem 3.2, p.175).
Moreover  any $q$-Sylow subgroup ($q\ne p$) is an abelian normal subgroup in $G$ and  each of  its subgroups is also normal in $G$ by  Lemma \ref{L:1}.
Consequently the finite group $G=P\times A$, where  $A=\cup_{q\not=p} Syl_q(G)$ is  an abelian normal subgroup of $G$.  Moreover, if $g\in G$ and  $z\in A$, then
\[
(z,g)=z^{-1}(g^{-1}zg)\in P\cap A=\gp{1}
\]
so $A$ is  central.  Since  $FG$ is a finite algebra,  $V(FG)$ is nilpotent (see \cite{Robinson}, 12.3.4,  p.372). \end{proof}

\smallskip

\begin{proof}[\underline{Proof of Theorem \ref{T:2}}]
Clearly $G'$ is locally nilpotent and  the ideal $\mathfrak I (P)$  is locally  nilpotent, hence $1+\mathfrak I (P)$ is a locally  nilpotent $p$-group which coincides  with the  $p$-Sylow normal  subgroup of $V(FG)$. In view of $G'\subseteq  P$ we obtain that
\begin{equation}\label{E:2}
V(FG)/\big(1+\mathfrak I (P)\big)\cong V(F [G/P]),\quad
V(FG)/\big(1+\mathfrak I (G')\big)\cong V(F [G/G'])
\end{equation}
are abelian groups.
From the second isomorphism in  (\ref{E:2}) it follows that $V'\leq 1+\mathfrak I (G')$.

Let $Syl_q(G)$ be a Sylow $q$-subgroup ($q\not=p$) of $G$.
According to Lemma \ref{L:1},    $Syl_q(G)$ is a  normal   abelian subgroup in $G$. Moreover, if $g\in G$ and  $z\in Syl_q(G)$, then
\[
(z,g)=z^{-1}(g^{-1}zg)\in P\cap Syl_q(G)=\gp{1}
\]
so $Syl_q(G)$ is  central. Consequently,   the group $\mathfrak{t}(G)$  is equal to $P\times D$
where $D=\times_{\substack{q\not=p}} Syl_q(G)\subseteq \zeta(G)$ and $\zeta(G)$ is the center of $G$.

Define the subgroup $M$ of $G/P$ to be the group generated by the central subgroup $DP/P$ and by all  elements of infinite order in $G/P$. Evidently, each  element of $M$ can be expressed as a product of an  element from  $DP/P(\cong D)$ and an element of infinite order. Consequently, the subgroup $M$ of $G/P$ has no element of order $p$ and $M/D$ is a free abelian group.

The algebra $F[M]$ can be expressed   as a crossed product $S(K, F[D])$ of the group $K\cong M/D$ over the group algebra $F[D]$ (see \cite{Bovdi_book}, Lemma 11.1, p.63).  Since $F[D]$  isomorphic to a central subalgebra of $S(K, F[D])$, the algebra $S(K, F[D])$  is a twisted group ring with $G$-basis
$\{t_g\mid g\in G\}$ such that $t_gt_h=t_{gh}\mu_{g,h}$ for all  $g,h\in G$, where   $\mu_{g,h}\in \mu=\{\mu_{a,b}\in U(FD)\mid a,b\in G\}$ is the factor system of $S(K, FD)$ (see \cite{Bovdi_book}, Chapter 11). Any unit $u\in FM$ can be written as $u=\sum_{i=1}^s\alpha_it_{g_i}\in S(K, FD)$,\quad  where $\alpha_i\in FD$. Define
\[
B=\gp{supp(\alpha_1), \ldots, supp(\alpha_s)}\leq D.
\]
If   $e_1,\ldots, e_n$ is  a complete set of primitive,  mutually
orthogonal idempotents of the f.d. commutative algebra $FB$ such that $e_1+ \cdots +e_n=1$,  then
\[
FB=FBe_1\oplus  \cdots \oplus FBe_n \quad \text{and} \quad FM=FMe_1\oplus \cdots \oplus FMe_n.
\]
The field $FBe_i$ is denoted   by $F_i$.  It is easy to see that
\[u\in S(K, FB)=S(K, F_1\oplus \cdots\oplus F_n)=S(K, F_1)\oplus \cdots \oplus S(K, F_n).
\]
Since  $K$ is an ordered group, all units in  $S(K, F_i)$  are trivial (see \cite{Bovdi_Crossed}, Lemma 3, p.495) and $ue_i=\beta_ie_it_{h_i}$ for some $h_i\in K$ and $\beta_i\in FB$. Consequently
\[
u=(\beta_1 e_1)t_{h_1}+(\beta_2 e_2) t_{h_2}+\cdots+(\alpha_n e_n) t_{h_n},\qquad  (\beta_i\in FB,\, h_i\in K)
\]
so  $U\big(S(K, F_i)\big)\cong U(F_i)\times K$.
Note that the description of the group $V(FM)$ depends only on the subgroup $D$.

Let us give all invariants of $V(FM)$ in the case when  the abelian group $D$ is finite. Set $B=D$.  Since  $U(S(K, F_i))\cong U(F_i)\times K$ for $i=1,\ldots, n$,
\[
\begin{split}
U(F[M])& \cong  U(S(K, F_1))\times  \cdots \times U(S(K, F_n))\\
& \cong  (U(F_1) \times K)\times \cdots \times (U(F_n)\times K)\\
& \cong  (U(F_1) \times \cdots \times U(F_n))\times L,\\
\end{split}
\]
where $L$ is a direct product of $n$ copies of $K$.
It follows that $U(FM)$ is an extension  of the torsion-free abelian group $L$   by a central subgroup  $U(FD)$. \end{proof}

\smallskip

\begin{proof}[\underline{Proof of Theorem \ref{T:3}}]
Since  $V(FG)$ is  a locally nilpotent group,   $G$  is also locally nilpotent.
Let $S=\gp{f_1,\ldots, f_s\mid f_i\in V(FG)}$ be  an f.~g.  subgroup of $V(FG)$.
Clearly
\[
H=\gp{supp(f_1),\ldots, supp(f_s)}
\]
is an f.~g. nilpotent  subgroup of $G$ and
$S\leq V(FH)< V(FG)$.
Hence  we may restrict our attention to the subgroup $V(FH)$, where  $H$ is an f.~g. nilpotent subgroup  of $G$.

Let $H$  be an f.~g. nilpotent group  containing a $p$-element and let $g, h\in H$ such that $(g,h)\not=1$. Obviously  $\mathfrak{t}(H)$ is finite (see   \cite{Hall_2}, 7.7, p. 29)  and the finite subgroup $Syl_p(\mathfrak{t}(H))$ is a direct factor of $\mathfrak{t}(H)$ (see  \cite{Hall_book}, 10.3.4, p.176). Consequently,  there exists  $c\in \zeta(H)$ of order $p$,   and $\widehat{c}=\sum_{i=0}^{p-1}c^i$   is a square-zero central element of $FG$. Since  $L=\gp{g,h, 1+g\widehat{c}}$ is an f.~g. nilpotent subgroup of $V(FG)$, so  $L$ is Engel and  there exists $m\in \mathbb{N}$ such that the nilpotency class $cl(L)$  of $L$ is at most   $p^m$.

Let us show that  $(g,h)$ is a $p$-element. Indeed, if  $q=p^m$ then  we have
\[
1=\big(1+g\widehat{c},h, q\big)= 1+\widehat{c}\sum_{i=0}^q (-1)^i\textstyle\binom {q}{i} g^{h^{q-i}}=1+\widehat{c}(g^{h^{q}}-g),
\]
because $\binom {q}{i} \equiv 0 \pmod{p}$ for  $0 <i< q$. It follows that  $\widehat{c}(g^{h^{q}}-g)=0$ and $(g,h^{q})\in supp(\widehat{c})$,
which  yields that  $g^{h^{q}}=c^ig$ for some $0\leq i<p$. Hence
\[
\big(\underbrace{h^{-q}\cdots h^{-q}}_p) g(\underbrace{h^{q}\cdots h^{q}}_p\big)=(c^i)^pg=g
\]
and $h^{p^{m+1}}\in C_G(g)$,  so  $h^{p^{m+1}}\in \zeta(L)$ for all $h\in L$ and  $L'$ is a finite $p$-group by a theorem of Schur (see \cite{Robinson}, 10.1.4,  p.287). Consequently  $G'$ is a $p$-group.

Conversely, it is sufficient  to prove  that if $H$ is an f.~g.  nilpotent subgroup of $G$ such that $H'$ is a $p$-group then   $V(FH)$ is nilpotent. We known that all subgroups and factor groups of $H$ are also f.~g.  groups
(\cite{Robinson}, 5.2.17, p.132). Moreover $\mathfrak{t}(H)=P\times D$ is finite (see   \cite{Robinson}, 12.1.1, p.356) and $H/\mathfrak{t}(H)$ is a direct product of a finite number of infinite cyclic groups.

Let $Syl_q(H)$ be a Sylow $q$-subgroup ($q\not=p$) of $H$. It is well known (see   \cite{Hall_2}, 7.7, p. 29  and  \cite{Hall_book}, 10.3.4, p.176) that $Syl_q(H)$ is normal in  $H$. Moreover, if $g\in H$ and  $z\in Syl_q(H)$, then
\[
(z,g)=z^{-1}(g^{-1}zg)\in P\cap Syl_q(G)=\gp{1}
\]
so $Syl_q(H)$ is  central. Consequently,   $\mathfrak{t}(H)=P\times D$,
where $D=\times_{\substack{q\not=p}} Syl_q(G)\subseteq \zeta(H)$ and   (\ref{E:1}) holds by Theorem \ref{T:4}({\rm v}).
Hence $V(F[G/P])$ is an extension  of an f.~g. abelian group $L$   by a central subgroup  $V(FD)$. By Lemma 2.4 from \cite{Bovdi_Khripta_2} (or by  Lemma 2.1 in \cite{Bovdi_engel}, on p.176) every extension of a nilpotent group $1+\mathfrak I (P)$ by an f.~g.  abelian group $L$ is nilpotent. Since $U(FD)$ is central so the group $U(FH)$ is nilpotent.\end{proof}

\smallskip

\begin{proof}[\underline{Proof of Theorem \ref{T:4}}]
If $V(FG)$ is locally nilpotent,  then it is Engel, so by Theorem 1.1 from \cite{Bovdi_engel},  the locally nilpotent group $G$ satisfies   conditions  (i)--(ii) of our theorem. Since a locally nilpotent group is an  u.~p. group,  the converse of our theorem follows from Theorem 2 in \cite{Khripta_2} and Theorem 1.1 in \cite{Bovdi_engel}.
\end{proof}

\end{document}